\documentclass[11pt]{amsart}

\usepackage[margin=1.08in]{geometry}
\usepackage{amsmath,amssymb,amsthm,mathtools}
\usepackage[T1]{fontenc}
\usepackage[utf8]{inputenc}
\usepackage{lmodern}
\usepackage{microtype}
\usepackage[dvipsnames]{xcolor}
\usepackage{hyperref}
\usepackage{enumitem}

\hypersetup{
  colorlinks=true,
  linkcolor=blue!45!black,
  citecolor=blue!45!black,
  urlcolor=blue!45!black,
  pdftitle={On the sequence gcd(a to the n minus 1, b to the n minus 1)},
  pdfauthor={Khai-Hoan Nguyen-Dang}
}

\newtheorem{theorem}{Theorem}[section]
\newtheorem{proposition}[theorem]{Proposition}
\newtheorem{corollary}[theorem]{Corollary}
\newtheorem{lemma}[theorem]{Lemma}
\theoremstyle{remark}

\numberwithin{equation}{section}
\allowdisplaybreaks

\newcommand{\Z}{\mathbb Z}
\newcommand{\Q}{\mathbb Q}
\newcommand{\ord}{\operatorname{ord}}

\title{On the sequence $\gcd(a^n-1,b^n-1)$}
\author[K.-H. Nguyen-Dang]{Khai-Hoan Nguyen-Dang}
\address{Morningside Center of Mathematics, Chinese Academy of Sciences, No.~55, Zhongguancun East Road, Beijing 100190, China}
\email{khaihoann@gmail.com}
\date{}

\begin{document}

\begin{abstract}
For integers $a,b\ge 2$, put
\[
  g_n:=\gcd(a^n-1,b^n-1)\qquad(n\ge 1).
\]
We prove that $(g_n)$ satisfies a constant-coefficient linear recurrence if and only if
$a$ and $b$ are multiplicatively dependent; the same conclusion holds if only a tail
of $(g_n)$ is assumed to satisfy such a recurrence.  More generally, if $a$ and $b$
are multiplicatively independent, then for every fixed shift $N\ge 0$, every
integer sequence $(W_n)_{n\ge 1}$ that satisfies a constant-coefficient linear
recurrence and the pointwise divisibilities
\[
  W_n\mid a^{n+N}-1
  \qquad\text{and}\qquad
  W_n\mid b^{n+N}-1
  \qquad(n\ge 1)
\]
is eventually periodic.  If $(W_n)$ satisfies a recurrence with nonzero trailing
coefficient, then it is periodic from the first term.  In particular, every common
integer linear divisibility-sequence factor of $a^n-1$ and $b^n-1$, in the convention
adopted here, is periodic.  The proof combines the Bugeaud--Corvaja--Zannier bound
with the structure of subexponential linear recurrences and a local valuation
argument.
\end{abstract}

\subjclass[2020]{Primary 11B37, 11A05; Secondary 11B39, 11D61}
\keywords{gcd-sequence, linear recurrence sequence, linear divisibility sequence,
Ailon--Rudnick conjecture}
\maketitle

\section{Introduction}\label{sec:introduction}

For integers $a,b\ge 2$, define
\[
  g_n:=\gcd(a^n-1,b^n-1)\qquad(n\ge 1).
\]
The sequence $(g_n)$ is automatically a divisibility sequence: if $m\mid n$, then
\[
  a^m-1\mid a^n-1
  \qquad\text{and}\qquad
  b^m-1\mid b^n-1,
\]
hence $g_m\mid g_n$.  The sequence $(g_n)$ therefore lies at the intersection
of two classical themes: greatest common divisors of exponential sequences and
the structure theory of divisibility sequences.

The modern study of small values of $g_n$ begins with Ailon and Rudnick
\cite{AR04}.  They proved the following strong polynomial analogue over
$\mathbf C[T]$: if $f,g\in\mathbf C[T]$ are multiplicatively independent, then
there exists a nonzero polynomial $h\in\mathbf C[T]$ such that
\[
  \gcd(f^n-1,g^n-1)\mid h
  \qquad(n\ge 1).
\]
In the integer setting they conjectured that, if $a$ and $b$ are
multiplicatively independent and
\[
  \gcd(a-1,b-1)=1,
\]
then
\[
  \gcd(a^n-1,b^n-1)=1
\]
for infinitely many $n$.  The fundamental unconditional quantitative result in
this direction is the theorem of Bugeaud, Corvaja, and Zannier \cite{BCZ03}:
for multiplicatively independent integers $a,b\ge 2$ and every
$\varepsilon>0$,
\[
  \gcd(a^n-1,b^n-1)<\exp(\varepsilon n)
\]
for all sufficiently large $n$; equivalently,
\[
  \log \gcd(a^n-1,b^n-1)=o(n).
\]
Levin \cite{Lev19} established higher-dimensional $S$-unit and algebraic-torus
gcd inequalities, together with applications to simple linear recurrence
sequences.  Grieve and Wang \cite{GW20} developed moving-target versions and
further applications to algebraic linear recurrence sequences.  See also the
survey of Tron \cite{Tron20}.

The problem also sits naturally inside Silverman's generalized-gcd program
\cite{Sil05}, which interprets gcd questions through local heights and
intersections on blowups and places divisibility sequences in the setting of
algebraic groups.  Several related function-field and geometric extensions have
subsequently been developed.  Silverman studied common divisors of elliptic
divisibility sequences over function fields \cite{Sil04}; Ostafe proved
univariate and multivariate extensions of the polynomial Ailon--Rudnick theorem
\cite{Ost16}; Ghioca, Hsia, and Tucker obtained an elliptic-surface variant,
including a special case of a conjecture of Silverman \cite{GHT18}; and
Barroero, Capuano, and Turchet proved function-field results for abelian and
split semiabelian varieties \cite{BCT24}.  Silverman has also formulated
explicit rational and number-field versions of the original question
\cite{Sil20}; we discuss what carries over, and the additional obstacles, in
Subsection~\ref{subsec:number-field}.

The purpose of the present paper is to determine exactly what constant-coefficient
linear recurrence methods and linear divisibility-sequence methods can and cannot
explain about the sequence $(g_n)$ itself.  More precisely, a sequence
$(u_n)_{n\ge 1}\subset\Z$ is a \emph{constant-coefficient linear recurrence
sequence} if there exist an integer $d\ge 1$ and integers $c_1,\dots,c_d$ such
that
\begin{equation}\label{eq:recurrence-definition}
  u_{n+d}=c_1u_{n+d-1}+\cdots+c_du_n
  \qquad(n\ge 1).
\end{equation}
Such sequences are also called \emph{$C$-finite}.  Here the trailing coefficient
$c_d$ is allowed to vanish.  We say that the sequence admits a recurrence with
\emph{nonzero trailing coefficient} if a relation of the form
\eqref{eq:recurrence-definition} can be chosen with $c_d\ne 0$.  This condition
excludes a zero characteristic root and permits the recurrence to be propagated
backwards; without it, a finite initial transient may occur.  If $d$ is minimal
among relations of the form \eqref{eq:recurrence-definition} valid from the first
index, then $d$ is called the \emph{order} of the recurrence.  Throughout the
paper, the unqualified term \emph{linear recurrence} means a
constant-coefficient linear recurrence over $\Z$.

If
\[
  U(x):=\sum_{n\ge 1}u_nx^n,
\]
then
\[
  \bigl(1-c_1x-\cdots-c_dx^d\bigr)U(x)\in\Z[x].
\]
Hence
\[
  U(x)=\frac{A(x)}{B(x)}
\]
for some $A(x)\in\Z[x]$ and
\[
  B(x)=1-c_1x-\cdots-c_dx^d\in\Z[x],
\]
so in particular $B(0)=1$.

A sequence $(u_n)_{n\ge 1}$ of nonzero integers is a \emph{divisibility
sequence} if $u_m\mid u_n$ whenever $m\mid n$.  Following the recurrence
convention used by Granville \cite{Gra22}, we call it a \emph{linear divisibility
sequence} (LDS) if it also admits a constant-coefficient recurrence with nonzero
trailing coefficient.  Granville uses the synonymous expression \emph{linear
division sequence}.

Classical work of Hall and Ward initiated the systematic study of linear
divisibility sequences \cite{Hall36,Ward37}.  B\'ezivin, Peth\H{o}, and van der
Poorten obtained a far-reaching structural description of divisibility sequences
among linear recurrences \cite{BPV90}, and Granville has now classified integer
linear division sequences in full generality \cite{Gra22}.  Our contribution lies
exactly at the boundary of this theory: $(g_n)$ is always a divisibility sequence,
but it is a constant-coefficient linear recurrence if and only if $a$ and $b$ are
multiplicatively dependent.  More generally, when $a$ and $b$ are multiplicatively
independent and $N\ge 0$ is fixed, every integer linear recurrence that divides
both $a^{n+N}-1$ and $b^{n+N}-1$ termwise is eventually periodic; it is periodic
from the first term whenever it admits a recurrence with nonzero trailing
coefficient.  In particular, every common integer LDS factor of $a^n-1$ and
$b^n-1$ is periodic.

Our first result gives the exact classification of the gcd-sequence itself.

\begin{theorem}\label{thm:classification}
Let $a,b\ge 2$ be integers, and let
\[
  g_n=\gcd(a^n-1,b^n-1)\qquad(n\ge 1).
\]
The following are equivalent:
\begin{enumerate}[label=\textup{(\arabic*)},leftmargin=2.5em]
  \item $a$ and $b$ are multiplicatively dependent;
  \item $(g_n)_{n\ge 1}$ satisfies a constant-coefficient linear recurrence;
  \item for some $N\ge 0$, the tail $(g_{n+N})_{n\ge 1}$ satisfies a
  constant-coefficient linear recurrence.
\end{enumerate}
If these conditions hold, then there are integers $c\ge 2$ and $r,s\ge 1$ such
that $a=c^r$ and $b=c^s$.  Writing $d=\gcd(r,s)$, one has
\[
  g_n=c^{dn}-1\qquad(n\ge 1).
\]
\end{theorem}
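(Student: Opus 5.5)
The plan is to prove (1)$\Rightarrow$(2)$\Rightarrow$(3)$\Rightarrow$(1); the formula for $g_n$ falls out of the first step.

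\emph{(1)$\Rightarrow$(2).} If $a^p=b^q$ with $p,q\ge 1$, unique factorization gives an integer $c\ge 2$ (the common ``root'', e.g.\ with prime exponents the gcd-reduced vector) and $r,s\ge1$ with $a=c^r$, $b=c^s$. The standard identity $\gcd(x^m-1,x^k-1)=x^{\gcd(m,k)}-1$ applied to $x=c^n$ yields
\[
g_n=c^{dn}-1\qquad(n\ge1),\quad d=\gcd(r,s).
\]
This satisfies $g_{n+2}=(c^d+1)g_{n+1}-c^d g_n$. The implication (2)$\Rightarrow$(3) is trivial with $N=0$.

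\emph{(3)$\Rightarrow$(1).} Suppose $a,b$ are multiplicatively independent and $u_n:=g_{n+N}$ is $C$-finite. By Bugeaud--Corvaja--Zannier, $\log u_n=o(n)$, so $u_n$ has subexponential growth. Write the recurrence in Binet form, $u_n=\sum_i P_i(n)\alpha_i^n$ for $n$ large, where the zero root only contributes finitely many terms. A standard fact is that a $C$-finite integer sequence with $|u_n|=e^{o(n)}$ has all nonzero characteristic roots of modulus at most $1$. These roots are algebraic integers whose conjugates are also roots, so by Kronecker they are roots of unity. Hence for large $n$, $u_n$ is a quasi-polynomial: there is $M$ with $u_{Mk+j}=Q_j(k)$ for polynomials $Q_j\in\Q[k]$. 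The goal is to show every $Q_j$ is constant, which makes $(u_n)$ eventually periodic and bounded. Then $g_n$ is bounded. But $g_n\ge 2$ infinitely often, and more to the point $g_{n}\ge \gcd(a-1,b-1)$; this alone gives no contradiction, so the final step needs a sharper use of the structure (see below).

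\emph{Killing the polynomial parts (main obstacle).} I would use a local valuation argument. Suppose $Q_j$ is nonconstant. Then $Q_j(k)$ takes values divisible by a prime $p\nmid ab$ for suitable $k$, with $p$ arbitrarily large: by Schur's theorem a nonconstant polynomial has infinitely many prime divisors. Fix such a $p$ and $k_0$ with $p\mid Q_j(k_0)$. Then $p\mid Q_j(k_0+pt)$ for all $t$, so $p\mid g_{N+M(k_0+pt)+j}$ for all $t$. Divisibility by $a^{n}-1$ along this progression forces $\ord_p(a)\mid n$ for all $n$ in an arithmetic progression with difference $Mp$. Hence $\ord_p(a)\mid \gcd(Mp, n_0)$. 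Since $\ord_p(a)\mid p-1$ is coprime to $p$, we get $\ord_p(a)\mid M$, and likewise $\ord_p(b)\mid M$. Thus $p\mid a^M-1$, which is possible only for finitely many $p$, a contradiction. So $u$ is eventually periodic.

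\emph{Ruling out the periodic case.} It remains to contradict periodicity. If $u$ is eventually periodic with period $T$, then the prime divisors of $g_n$ for large $n$ all lie in a finite set $S$. I would pick a large $n\equiv n_1\pmod T$ that is divisible by $\ord_p(a)\ord_p(b)$ for some prime $p\notin S$ with $p\nmid ab$. Such a $p$ must have $\ord_p(a),\ord_p(b)$ compatible with the residue class. To arrange this, choose $n_1$ and then $p$ dividing $\gcd(a^{T}-1,b^{T}-1)$-type quantities, or more simply use that $g_{Tm}$ is divisible by $g_T$, and $u$ is constant on $\{Tm\}$ for large $m$. So $g_{Tm}$ is constant, say $G$, for large $m$. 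Then take a prime $p\nmid abG$ and $m$ a multiple of $(p-1)$: $p\mid a^{Tm}-1$ and $p\mid b^{Tm}-1$, so $p\mid g_{Tm}=G$, a contradiction. This contradiction gives (1).
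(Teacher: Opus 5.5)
Your proposal is correct, but its decisive step differs from the paper's.

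The paper derives (3)$\Rightarrow$(1) from Theorem~\ref{thm:common-factor}. There, the polynomial pieces are controlled by the lifting-the-exponent bound of Lemma~\ref{lem:valuation-bound} together with a Schur--Hensel--CRT argument (Lemma~\ref{lem:local-polynomial}). That only reduces each piece to the form $\lambda_r(X+N)^{e_r}$ with $e_r\in\{0,1\}$. The linear pieces then have to be excluded separately, using Dirichlet's theorem on primes in progressions together with Fermat.

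You use a single mod-$p$ observation instead. Take a Schur prime $p$ of a nonconstant piece $Q_j$, large enough that $p\nmid a$ and $p$ is coprime to the denominators of $Q_j$. Then $p$ divides $u_n$ along an entire progression with common difference $Mp$. Hence $\ord_p(a)\mid\gcd(Mp,p-1)$, which divides $M$, so $p\mid a^M-1$; this cannot hold for infinitely many $p$.

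This removes every nonconstant piece at once, linear ones included, with no valuation bound, no Hensel lifting and no Dirichlet. Since it uses only $u_n\mid a^{n+N}-1$, the same argument also gives the eventual-periodicity part of Theorem~\ref{thm:common-factor}. What the paper's route buys is finer structural information: the exact shape of quasipolynomial pieces compatible with a fixed-prime valuation bound, packaged in a lemma with a purely local hypothesis. For the theorem at hand, your argument is the more economical one.

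Minor points to tidy up:
\begin{itemize}
\item The claim about characteristic roots of modulus at most $1$ should be stated for the minimal recurrence, or, as in Proposition~\ref{prop:quasipolynomial}, for the reduced generating function. This is also what guarantees that the roots are Galois-stable algebraic integers.
\item The rationality of the $Q_j$ deserves one line of justification.
\item The divisibilities $p\mid u_n$ hold only for large $t$, in the eventual range.
\item The remark about $\ord_p(b)$ is unnecessary.
\item The tentative text before ``or more simply'' can be deleted. Boundedness alone already contradicts $p\mid g_{p-1}$ for every prime $p\nmid ab$, which is exactly how the paper concludes and is the same idea as your $g_{Tm}=G$ argument.
\end{itemize}
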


The second result is stronger than the LDS statement suggested by the divisibility
of $(g_n)$: every integer linear recurrence that is a pointwise common factor is
eventually periodic, even after a fixed shift in the exponent.  A nonzero trailing
coefficient removes the possible finite transient.

\begin{theorem}\label{thm:common-factor}
Assume that $a$ and $b$ are multiplicatively independent, and fix $N\ge 0$.
Let $(W_n)_{n\ge 1}$ be an integer sequence satisfying a constant-coefficient
linear recurrence such that
\begin{equation}\label{eq:common-factor-hypothesis}
  W_n\mid a^{n+N}-1
  \qquad\text{and}\qquad
  W_n\mid b^{n+N}-1
  \qquad(n\ge 1).
\end{equation}
Then $(W_n)_{n\ge 1}$ is eventually periodic.  If $(W_n)$ satisfies a recurrence
with nonzero trailing coefficient, then it is periodic from the first term.
\end{theorem}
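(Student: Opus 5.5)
Since each $W_n$ divides both $a^{n+N}-1$ and $b^{n+N}-1$, we have $|W_n|\le \gcd(a^{n+N}-1,b^{n+N}-1)$; both terms are nonzero because $a,b\ge 2$. The Bugeaud--Corvaja--Zannier bound therefore gives $\log|W_n|=o(n)$, so $(W_n)$ has subexponential growth. The plan has three steps. First, show that a subexponential integer linear recurrence is eventually a quasi-polynomial whose characteristic roots are all roots of unity. Second, use a local valuation argument to rule out nonconstant polynomial coefficients. Third, handle the passage from "eventually periodic" to "periodic from the first term."

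\emph{Step 1: structure.} Write $W_n=\sum_i P_i(n)\alpha_i^n$ for $n\ge n_0$, with distinct nonzero algebraic integers $\alpha_i$ and nonzero polynomials $P_i$. The terms attached to a zero characteristic root only affect finitely many initial values. I will show that every $\alpha_i$ satisfies $|\alpha_i|\le 1$. Let $M=\max|\alpha_i|$ and suppose $M>1$. The dominant part $\sum_{|\alpha_i|=M}P_i(n)\alpha_i^n$ has size $\gg n^{k}M^n$ along a set of $n$ of positive density, by the standard nondegeneracy argument (for instance a Cauchy--Schwarz averaging of $\sum c_i e^{i n\theta_i}$ over long blocks). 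This contradicts $\log|W_n|=o(n)$. A simpler route is via the generating function: $\sum W_n x^n$ is rational with integer coefficients and has radius of convergence $\ge 1$, so every pole lies in $|x|\ge 1$. Since $W(x)$ is rational with integer coefficients and $B(0)=1$, Fatou's lemma applies: the reduced denominator can be taken to be an integer polynomial with constant term $1$. Its reciprocal roots $\alpha_i$ are algebraic integers all of whose conjugates (the conjugates of a root are again roots) satisfy $|\alpha_i|\le 1$. By Kronecker's theorem each $\alpha_i$ is a root of unity. Hence for $n\ge n_0$ there is a period $L$ and polynomials $Q_r\in\Q[x]$ such that $W_n=Q_r(n)$ whenever $n\equiv r\pmod L$.

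\emph{Step 2: the polynomials are constant.} This is the main step. Fix a residue class $r$ and suppose $Q:=Q_r$ is nonconstant. The integer-valued polynomial $Q(n)$ then has infinitely many prime divisors as $n$ ranges over $n\equiv r\pmod L$, $n\ge n_0$; by Schur's theorem applied to $Q(r+Lm)$, choose such a prime $p\nmid abL$. I want to find a particular $n$ in this class with $p\mid Q(n)$ but $p\nmid a^{n+N}-1$, which contradicts $W_n\mid a^{n+N}-1$. Let $t=\ord_p(a)$. The condition $p\mid Q(n)$ depends only on $n\bmod p$, while $p\mid a^{n+N}-1$ is equivalent to $t\mid n+N$. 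The first task is to choose $n$ with $p\mid Q(n)$ and $n\equiv r\pmod L$. If $\gcd(t,p)=1$, I can also impose a congruence modulo $t$ making $t\nmid n+N$, provided $t>1$ and the condition is compatible with the constraint modulo $L$, i.e.\ provided $\gcd(t,L)$ does not force $t\mid n+N$. Since $t\mid p-1$, the condition $\gcd(t,p)=1$ holds automatically. The remaining danger is that $t$ divides $L$, or that $t=1$, i.e.\ $p\mid a-1$; both cases involve only finitely many primes (the divisors of $a-1$, and the divisors of $a^{j}-1$ for $j\mid L$). Infinitely many primes divide values of $Q$ on the class, so I can avoid these finitely many exceptions. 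One could alternatively argue using both $a$ and $b$, but $a$ alone seems to suffice. The contradiction shows that each $Q_r$ is constant, so $(W_n)$ is eventually periodic. I expect the careful handling of the case where $t$ interacts with $L$ to be the main obstacle.

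\emph{Step 3: no transient.} If the recurrence has $c_d\ne 0$, the recurrence determines $W_n$ uniquely from later terms by running it backwards. A periodic sequence satisfying the eventual relation extends backwards and satisfies the same recurrence. Hence backward uniqueness forces $W_n$ to equal this periodic extension for all $n\ge 1$, so $(W_n)$ is periodic from the first term.
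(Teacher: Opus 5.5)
Your proposal is correct. Steps 1 and 3 essentially reproduce the paper's route: the generating function with reduced denominator $B(0)=1$ plus Kronecker (Proposition~\ref{prop:quasipolynomial}), and backward propagation when $c_d\ne 0$ (Lemma~\ref{lem:eventual-periodic}). Step 2, however, is genuinely different from the paper's argument and more economical.

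\textbf{The paper's Step 2.} It uses only the $p$-adic growth bound $v_p(W_n)\le C_{a,p}+v_p(n+N)$, obtained from lifting-the-exponent. Lemma~\ref{lem:local-polynomial} then proceeds in two stages. Hensel lifting at a simple root of an irreducible factor of $Q_r$ forces $Q_r=\lambda_r(X+N)^{e_r}$, and a second valuation count gives $e_r\le 1$. The linear case $e_r=1$ is ruled out separately, using Dirichlet primes $\ell\equiv r_0\pmod{M_0}$ together with Fermat's theorem.

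\textbf{Your Step 2.} You use the exact zero set of $a^{n+N}-1$ modulo $p$: $p\mid a^{n+N}-1$ if and only if $t=\ord_p(a)\mid n+N$. Take $p$ dividing some value of $Q_r$ on the class, outside the primes dividing $abLD(a^L-1)$, where $D$ clears the denominators of $Q_r$. Then $t\nmid L$, so $t/\gcd(t,L)\ge 2$. The Chinese remainder theorem modulo $p$, $L$ and $t$ (with $p$ coprime to both $L$ and $t$) gives arbitrarily large $n\equiv r\pmod L$ with $p\mid W_n$ but $t\nmid n+N$, which is a contradiction. This disposes of every nonconstant $Q_r$ at once, including $\lambda(X+N)$, using only Schur's theorem and the Chinese remainder theorem: no lifting-the-exponent, no Hensel, no Dirichlet.

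\textbf{Comparison.} The paper's route yields a reusable lemma that needs only local valuation bounds, which may adapt better to settings where only such bounds are available. Your route exploits more of the specific structure of $a^{n+N}-1$ and is shorter for this theorem.

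Two small points to tidy:
\begin{enumerate}
\item State explicitly that $p\nmid D$. Without this, $p\mid Q_r(n)$ need not depend only on $n\bmod p$.
\item Justify $Q_r\in\Q[X]$, for example by interpolation at integer points as the paper does.
\end{enumerate}
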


Taking $N=0$ shows, in particular, that every common integer LDS factor of
$a^n-1$ and $b^n-1$ is periodic.  Notice that Theorem~\ref{thm:common-factor}
does not assume the divisibility-sequence property for $(W_n)$.

The distinction between eventual and pure periodicity is unavoidable under the broad
$C$-finite convention.  For example, with $a=3$ and $b=5$, the sequence
$W_1=1$ and $W_n=2$ for $n\ge 2$ divides both $a^n-1$ and $b^n-1$ at every
index; it is eventually periodic and $C$-finite, but it is not periodic from the first
term.  It does not admit a recurrence with nonzero trailing coefficient.

\subsection{Rational and number-field variants}\label{subsec:number-field}

Silverman explicitly formulated rational and number-field versions of the
Ailon--Rudnick question \cite{Sil20}.  For $x,y\in\Q^\times$, let
$\gcd_{\Q}(x,y)$ denote the gcd of the numerators of $x$ and $y$ in lowest terms.
Silverman's rational version asks whether, for multiplicatively independent
$a,b\in\Q^\times$, one has
\[
  \gcd_{\Q}(a^n-1,b^n-1)=\gcd_{\Q}(a-1,b-1)
\]
for infinitely many $n$.  If $a=A/C$ in lowest terms, then the numerator of
$a^n-1$ is $A^n-C^n$.  Consequently, much of the present argument survives over
$\Q$: the relevant numerator sequences are still constant-coefficient linear
recurrences, the local lifting-the-exponent estimates have the same form away from
the finitely many primes dividing the numerators and denominators, and the global
subexponential input is supplied by the corresponding $S$-unit gcd estimates
\cite{Lev19,GW20}.

For a number field $K$, Silverman proposes the scalar quantity
\[
  \gcd_K(\alpha,\beta)
  :=\gcd_{\Q}\bigl(N_{K/\Q}(\alpha),N_{K/\Q}(\beta)\bigr),
  \qquad \alpha,\beta\in K^\times,
\]
and asks for the analogous equality between the gcd at exponent $n$ and the gcd at
exponent $1$ for infinitely many $n$.  A more local alternative is the gcd ideal
\[
  \mathfrak g_n=(\alpha^n-1,\beta^n-1)\mathcal O_{K,S}
\]
for a finite set $S$ containing the primes needed to make $\alpha$ and $\beta$
$S$-units.  The global small-gcd input has substantial number-field and
algebraic-torus extensions \cite{Sil05,Lev19,GW20}, but the recurrence argument does
not transfer verbatim.  There is no canonical gcd element when $\mathfrak g_n$ is
nonprincipal, and an ideal-valued sequence has no immediate analogue of a scalar
constant-coefficient recurrence.  Passing to $N_{K/\Q}(\mathfrak g_n)$ merges the
contributions of distinct prime ideals, and the resulting integer sequence is not
automatically a linear recurrence even though the ambient exponential sequences
are.  One must also distinguish multiplicative dependence in $K^\times$ from
versions taken modulo torsion, and keep track of roots of unity and $S$-units.
Finally, the elementary use of Fermat's theorem in the proof below must be replaced
by residue-field order conditions; obtaining suitable prime ideals uniformly may
require Chebotarev-type distribution input.  Thus the global Diophantine estimate
has an analogue, while the principal additional obstacles are the passage from ideal
divisibility to a scalar recurrence and the control of local factors uniformly across
prime ideals.

\subsection*{Organization}
Section~\ref{sec:preliminaries} records the elementary common-base identity, the
Bugeaud--Corvaja--Zannier theorem, and the local valuation estimate used later.
Section~\ref{sec:subexponential} proves that a subexponential integer linear
recurrence is eventually polynomial on residue classes and establishes the local
polynomial lemma needed to rule out nonconstant pieces.  In
Section~\ref{sec:main-proofs} we prove Theorems~\ref{thm:common-factor}
and~\ref{thm:classification}.

\subsection*{Acknowledgements}
The author thanks Morningside Center of Mathematics, Chinese Academy of Sciences,
for its support and a stimulating research environment.  The author also thanks
Professor Andrew Granville for comments on an initial draft and the referee for a
careful report and helpful suggestions that substantially improved the exposition.

\section{Preliminaries}\label{sec:preliminaries}

We say that positive integers $a$ and $b$ are \emph{multiplicatively dependent} if
$a^u=b^v$ for some positive integers $u,v$; otherwise they are
\emph{multiplicatively independent}.

\begin{lemma}[Common base and gcd identity]\label{lem:common-base}
Let $a,b\ge 2$ be multiplicatively dependent.  Then there are integers
$c\ge 2$ and $r,s\ge 1$ such that
\[
  a=c^r,\qquad b=c^s.
\]
Moreover, for every $x\ge 2$ and all positive integers $m,n$,
\[
  \gcd(x^m-1,x^n-1)=x^{\gcd(m,n)}-1.
\]
\end{lemma}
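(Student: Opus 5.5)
The lemma has two independent parts, and I will treat them separately.

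\emph{Common base.} Suppose $a^u=b^v$ with $u,v\ge 1$. For each prime $p$ this gives $u\,v_p(a)=v\,v_p(b)$. Hence the exponent vectors $(v_p(a))_p$ and $(v_p(b))_p$ are proportional, with
\[
  v_p(a)=\tfrac{v}{u}\,v_p(b).
\]
After dividing by $\gcd(u,v)$ we may assume $\gcd(u,v)=1$. Then $u\mid v_p(b)$ and $v\mid v_p(a)$ for every $p$, so we can write $v_p(a)=v\,e_p$ and $v_p(b)=u\,e_p$ with nonnegative integers $e_p$. Put
\[
  c=\prod_p p^{e_p}.
\]
Then $a=c^v$ and $b=c^u$. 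Since $a\ge 2$, we get $c\ge 2$, and we take $r=v$, $s=u$.

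An alternative route is to take $c$ to be the least integer $\ge 2$ of which $a$ is a power, and then argue via unique factorization. The valuation argument is cleaner, so I will use it.

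\emph{Gcd identity.} This is the classical Euclidean argument. Write $D=\gcd(x^m-1,x^n-1)$ and $d=\gcd(m,n)$.

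First, $d$ divides both $m$ and $n$, so $x^d-1$ divides both $x^m-1$ and $x^n-1$, and therefore $x^d-1\mid D$.

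Conversely, $D$ is coprime to $x$, since it divides $x^m-1$. Let $k$ be the multiplicative order of $x$ modulo $D$. Because $x^m\equiv x^n\equiv 1\pmod D$, we have $k\mid m$ and $k\mid n$, so $k\mid d$. Hence $x^d\equiv 1\pmod D$, which gives $D\mid x^d-1$. The case $D=1$ is trivial here.

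Both quantities are positive, so $D=x^d-1$. One could instead run induction using $\gcd(x^m-1,x^n-1)=\gcd(x^{m-n}-1,x^n-1)$, obtained from $x^m-1=x^{m-n}(x^n-1)+(x^{m-n}-1)$.

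There is no real obstacle in this lemma. The only point needing care is the coprimality step in the first part: reducing to $\gcd(u,v)=1$ is what guarantees that each $v_p(a)$ is divisible by $v$.
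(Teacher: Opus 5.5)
Your proposal is correct and follows the paper's proof essentially line for line. For the common base you reduce to $\gcd(u,v)=1$ and compare $p$-adic valuations to build $c=\prod_p p^{e_p}$, and for the gcd identity you use the multiplicative order of $x$ modulo the common divisor. Your explicit remark that $c\ge 2$ because $a\ge 2$ is a small point the paper leaves implicit.
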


\begin{proof}
Choose positive $u,v$ with $a^u=b^v$, and divide both exponents by
$\gcd(u,v)$, so that $\gcd(u,v)=1$.  For every prime $p$,
\[
  u\,v_p(a)=v\,v_p(b).
\]
Hence $v\mid v_p(a)$ and $u\mid v_p(b)$.  Writing
$v_p(a)=v e_p$ and $v_p(b)=u e_p$ and putting $c=\prod_p p^{e_p}$ gives
$a=c^v$ and $b=c^u$.

For the gcd identity, put $d=\gcd(m,n)$.  The number $x^d-1$ divides both
$x^m-1$ and $x^n-1$.  Conversely, let $D>1$ be a common divisor.  Then
$\gcd(D,x)=1$, and the multiplicative order of $x$ modulo $D$ divides both
$m$ and $n$.  It therefore divides $d$, so $D\mid x^d-1$.  (The case $D=1$
is immediate.)
\end{proof}

The global input is the following theorem of Bugeaud, Corvaja, and Zannier
\cite[Theorem~1]{BCZ03}.

\begin{theorem}[Bugeaud--Corvaja--Zannier]\label{thm:BCZ}
Let $a,b\ge 2$ be multiplicatively independent integers.  Then, for every
$\varepsilon>0$, there is $n_0=n_0(a,b,\varepsilon)$ such that
\[
  \gcd(a^n-1,b^n-1)<\exp(\varepsilon n)
\]
for every $n\ge n_0$.
\end{theorem}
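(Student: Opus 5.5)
Since this theorem is quoted from \cite{BCZ03}, here is a plan for reproving it along the lines of the original argument, via Schmidt's Subspace Theorem. Argue by contradiction. Suppose that for some $\varepsilon>0$ there is an infinite set $\mathcal N$ of exponents $n$ with $d_n:=\gcd(a^n-1,b^n-1)\ge\exp(\varepsilon n)$. We may assume $a<b$, and we may replace $(a,b)$ by suitable powers, since this only rescales $\varepsilon$. Let $S$ consist of $\infty$ together with the primes dividing $ab$. The guiding idea is that $(a^n-1)/d_n$ and $(b^n-1)/d_n$ are integers of size at most $b^n e^{-\varepsilon n}$. Expanding the quotient of $a^n-1$ by $b^n-1$ as a power series in the small quantity $b^{-n}$ then produces a rational number with unexpectedly small denominator that is extremely close to an $S$-unit combination.

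\textbf{Step 1: the approximations.} Fix a large integer $h$ depending on $\varepsilon$. Write
\[
\frac{a^n-1}{b^n-1}=(a^n-1)\,b^{-n}\sum_{j\ge0}b^{-jn},
\]
and truncate after $h$ terms. Multiply through by the integer $(b^n-1)/d_n$ to get
\[
\frac{a^n-1}{d_n}=\frac{b^n-1}{d_n}\sum_{j=1}^{h}\bigl(a^n-1\bigr)b^{-jn}+\text{(error)},
\]
where the error is roughly $b^{n}e^{-\varepsilon n}\,a^n b^{-(h+1)n}$. Clearing the denominator $b^{hn}$ yields an integer linear form. Its variables are the integer $x_0=(b^n-1)/d_n$ times $S$-unit monomials $a^{\delta n}b^{in}$ with $\delta\in\{0,1\}$ and $0\le i\le h$, plus $x_0$ itself and $(a^n-1)/d_n$. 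This form takes a value that is very small at the archimedean place.

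\textbf{Step 2: Subspace Theorem.} Set up the point $\mathbf x_n\in\Z^{M}$ whose coordinates are $(a^n-1)/d_n$ and the products $x_0\,a^{\delta n}b^{in}$. Use $M$ linearly independent forms at each place of $S$. At $\infty$, take the form above together with coordinate forms. At each $p\mid ab$, take coordinate forms, which exploit that the monomial coordinates are $p$-adically small. Compute
\[
\prod_{v\in S}\prod_i|L_{i,v}(\mathbf x_n)|_v\ll H(\mathbf x_n)^{-\eta}
\]
for some $\eta>0$ once $h$ is large in terms of $\varepsilon$. The gain of $e^{-\varepsilon n}$ coming from $d_n$ is what beats the trivial bound, so this step is where the hypothesis enters. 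The Subspace Theorem then says that the $\mathbf x_n$, $n\in\mathcal N$, lie in finitely many proper subspaces. Passing to an infinite subsequence, a fixed nontrivial linear relation holds among the coordinates.

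\textbf{Step 3: exploiting the relation.} Divide the relation by $x_0$ and substitute $(a^n-1)/d_n=x_0\cdot(a^n-1)/(b^n-1)$. This gives an identity
\[
\frac{a^n-1}{b^n-1}=\frac{\sum c_{\delta,i}a^{\delta n}b^{in}}{\sum c'_{\delta,i}a^{\delta n}b^{in}}
\]
with fixed coefficients, valid for infinitely many $n$. Equivalently, an $S$-unit polynomial identity in $a^n,b^n$ holds for infinitely many $n$. By the finiteness of nondegenerate solutions of $S$-unit equations (again the Subspace Theorem), some subsum vanishes identically along a subsequence. Since $a,b$ are multiplicatively independent, the monomials $a^{un}b^{vn}$ are distinct exponential functions, so the identity must hold as an identity of rational functions in $X=a^n$, $Y=b^n$. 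That would force $X-1$ and $Y-1$ to share a nonconstant factor after clearing denominators, which is impossible. This contradiction proves the theorem.

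I expect Step 2 to be the main obstacle. The delicate part is choosing the forms and $h$ so that the height $H(\mathbf x_n)\approx b^{(h+1)n}e^{-\varepsilon n}$ is beaten by a fixed power, and ensuring the $p$-adic contributions at primes dividing $ab$ exactly cancel the growth of the monomials. I would follow BCZ's bookkeeping and take $h\approx 1/\varepsilon$, checking the exponent inequality explicitly before invoking the Subspace Theorem.
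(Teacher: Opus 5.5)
The paper does not prove this statement; it quotes it from \cite{BCZ03}. Your reconstruction has a genuine gap in Step~2. For the point you describe, the Subspace inequality fails unless $d_n$ is far larger than $e^{\varepsilon n}$.

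Every coordinate except the first has the form $x_0\mu$, where $\mu$ is an $S$-unit but $x_0=(b^n-1)/d_n$ is \emph{not}. The product formula therefore removes only $\mu$:
\[
  \prod_{v\in S}|x_0\mu|_v=x_0\prod_{p\mid ab}|x_0|_p\gg x_0/n ,
\]
where the last bound comes from Lemma~\ref{lem:valuation-bound}. You have about $2h+2$ such coordinates, but only one small form, of size about $x_0a^nb^{-n}$; the first coordinate contributes $b^{-hn}$ at the finite places. The double product is thus of order
\[
  x_0^{2h+3}a^nb^{-(h+1)n}\approx a^nb^{(h+2)n}d_n^{-(2h+3)}
\]
up to powers of $n$. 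This is $<1$ only when $d_n>(ab^{h+2})^{n/(2h+3)}>b^{n/2}$.

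Taking $h$ large does not help, because each new approximation term brings as many lossy coordinates as it gains. Passing to projective height does not help either: the common factor $d_n$ lowers the height by $d_n$ per coordinate, while each coordinate costs about $b^n$. So one approximation relation over a one-parameter family of monomials cannot turn the saving $e^{-\varepsilon n}$ into a power saving.

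Step~3 is also stated too loosely. An identity $\frac{X-1}{Y-1}=\frac{A}{B}$ with $A,B$ combinations of monomials is not contradictory by itself (take $A=X-1$, $B=Y-1$). With your actual coordinates, though, the relation has no second sum, and the contradiction does go through.

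What is missing is a way to collect the factor $d_n$ in \emph{every} coordinate while the loss grows more slowly than the number of coordinates. This needs a two-parameter family of monomials. Since $a^n\equiv b^n\equiv 1\pmod{d_n}$, each $(a^ib^j)^n-1$ is divisible by $d_n$, so
\[
  \mathbf z_n=\Bigl(\frac{(a^ib^j)^n-1}{d_n}\Bigr)_{(i,j)\in\{0,\dots,D\}^2\setminus\{(0,0)\}}\in\Z^N,
  \qquad N=(D+1)^2-1 .
\]
Choose the forms as follows:
\begin{itemize}
\item at $\infty$, take the coordinate forms;
\item at each $p\mid ab$, order the indices so that $|(a^ib^j)^n|_p$ decreases, with $(0,0)$ first, put $z_{(0,0)}:=0$, and take the consecutive differences $z_{(k)}-z_{(k+1)}$ for $0\le k<N$. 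These forms are linearly independent and fixed in $n$, and their values have $p$-adic size at most $|(a^ib^j)^n|_p$ for the earlier index.
\end{itemize}
By the product formula the double product is at most $(ab)^{Dn}d_n^{-N}\le (ab)^{Dn}e^{-\varepsilon n(D^2+2D)}$. This is below $\|\mathbf z_n\|^{-1}$ as soon as $D>2\log(ab)/\varepsilon$: the gain is quadratic in $D$ and the loss is linear.

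The Subspace Theorem then yields a fixed nontrivial relation $\sum c_{ij}\bigl((a^ib^j)^n-1\bigr)=0$ along an infinite subsequence. By multiplicative independence the bases $a^ib^j$ are pairwise distinct, so a dominant-term argument gives the contradiction. No further $S$-unit theorem is needed.
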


We also need an elementary local estimate.  Here $v_p$ denotes the usual
$p$-adic valuation, applied to the absolute value of a nonzero integer.

\begin{lemma}[Local valuation bound]\label{lem:valuation-bound}
Let $c>1$ be an integer and let $p$ be a prime.  There is a constant
$C_{c,p}$ such that
\[
  v_p(c^m-1)\le C_{c,p}+v_p(m)
  \qquad(m\ge 1).
\]
\end{lemma}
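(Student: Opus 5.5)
The plan is to prove Lemma~\ref{lem:valuation-bound} by a lifting-the-exponent argument. Two trivial cases come first. If $p\mid c$, then $c^m-1\equiv -1\pmod p$, so $v_p(c^m-1)=0$ and any $C_{c,p}\ge 0$ works. From now on assume $p\nmid c$, and let $t$ be the multiplicative order of $c$ modulo $p$. If $t\nmid m$, then $p\nmid c^m-1$ and again the valuation is $0$. So it remains to treat $m=tk$ with $k\ge 1$. Since $t\mid p-1$, the order $t$ is prime to $p$, and therefore $v_p(m)=v_p(k)$.

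For odd $p$, put $x=c^t$, so that $x\equiv 1\pmod p$. The standard lifting-the-exponent lemma gives
\[
  v_p(x^k-1)=v_p(x-1)+v_p(k).
\]
Taking $C_{c,p}=v_p(c^t-1)$ then yields the bound. If one prefers not to quote LTE, it follows from two elementary observations:
\begin{itemize}
  \item if $\gcd(k,p)=1$, then $(x^k-1)/(x-1)=1+x+\dots+x^{k-1}\equiv k\not\equiv 0\pmod p$;
  \item $v_p(x^p-1)=v_p(x-1)+1$ when $p$ is odd and $p\mid x-1$, which one checks by writing $x=1+p^ey$ and expanding binomially.
\end{itemize}
Induction on $v_p(k)$, splitting $k=p^jk'$ with $p\nmid k'$, then gives equality.

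For $p=2$, $c$ is odd and $t=1$. If $m$ is odd, then $(c^m-1)/(c-1)$ is a sum of $m$ odd terms, hence odd, so $v_2(c^m-1)=v_2(c-1)$. If $m$ is even, the $p=2$ form of LTE gives
\[
  v_2(c^m-1)=v_2(c-1)+v_2(c+1)+v_2(m)-1.
\]
This also follows directly by factoring $c^{2^j}-1$ repeatedly as a difference of squares, noting that each factor $c^{2^i}+1$ with $i\ge1$ has $2$-adic valuation exactly $1$. It therefore suffices to take $C_{c,2}=v_2(c-1)+v_2(c+1)$.

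No step is a real obstacle. The only points needing care are the reduction from $m$ to $k=m/t$, which works because $p\nmid t$, and the separate even case at $p=2$. The constant $C_{c,p}$ depends only on $c$ and $p$ because it is read off from $c^t-1$ and $c\pm1$.
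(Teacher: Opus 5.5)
Your proposal is correct and follows the paper's proof essentially line by line. You dispose of $p\mid c$, reduce to $m=tk$ with $t=\ord_p(c)$ prime to $p$ and apply odd-prime LTE, and for $p=2$ quote the $2$-adic LTE formula. Your elementary derivations of LTE are optional extras that the paper omits. In the difference-of-squares aside for $p=2$, apply the factorization to $c^{m'}$, where $m=2^jm'$ with $m'$ odd, and use $v_2(c^{m'}\pm1)=v_2(c\pm1)$; this covers general even $m$ and not just powers of $2$.
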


\begin{proof}
If $p\mid c$, then $v_p(c^m-1)=0$ for all $m$.  Assume $p\nmid c$.
If $p$ is odd, let $t=\ord_p(c)$.  The valuation is zero unless $t\mid m$.
If $m=tk$, then $p\nmid t$, and the lifting-the-exponent formula gives
\[
  v_p(c^m-1)=v_p(c^t-1)+v_p(k)
            =v_p(c^t-1)+v_p(m).
\]
If $p=2$, then $c$ is odd.  The same formula in its $2$-adic form says
\[
  v_2(c^m-1)=v_2(c-1)\quad\text{if $m$ is odd},
\]
and
\[
  v_2(c^m-1)=v_2(c-1)+v_2(c+1)+v_2(m)-1
  \quad\text{if $m$ is even}.
\]
These formulas give the asserted bound.
\end{proof}

\section{Subexponential recurrences and local polynomial structure}
\label{sec:subexponential}

We first recall the standard consequence of Kronecker's theorem that turns a
subexponential integer recurrence into an eventual quasipolynomial.

\begin{lemma}[Kronecker]\label{lem:kronecker}
Let $\alpha\ne 0$ be an algebraic integer.  If every Galois conjugate of
$\alpha$ has absolute value at most $1$, then $\alpha$ is a root of unity.
\end{lemma}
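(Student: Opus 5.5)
The plan is the classical argument via symmetric functions of the conjugates. Let $\alpha$ have degree $d$ with conjugates $\alpha=\alpha_1,\dots,\alpha_d$, all of absolute value at most $1$. For each $k\ge 1$, consider the polynomial
\[
  P_k(x):=\prod_{i=1}^d (x-\alpha_i^k).
\]
Its coefficients are symmetric polynomials with integer coefficients in $\alpha_1,\dots,\alpha_d$. By the fundamental theorem on symmetric polynomials, they are integer polynomials in the elementary symmetric functions of the $\alpha_i$, which are the (integer) coefficients of the minimal polynomial of $\alpha$. Hence $P_k\in\Z[x]$, and it is monic of degree $d$.

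Next we bound these coefficients uniformly in $k$. The coefficient of $x^{d-j}$ in $P_k$ is $\pm e_j(\alpha_1^k,\dots,\alpha_d^k)$. Since $|\alpha_i^k|\le 1$, this coefficient has absolute value at most $\binom{d}{j}$. So the set $\{P_k: k\ge 1\}$ consists of monic integer polynomials of degree $d$ with bounded coefficients, and is therefore finite. Consequently the set of all roots, $\{\alpha^k:k\ge1\}$, is finite.

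By pigeonhole there are $k<\ell$ with $\alpha^k=\alpha^\ell$. Since $\alpha\ne0$, we get $\alpha^{\ell-k}=1$, so $\alpha$ is a root of unity.

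The only step requiring care is the integrality of $P_k$. This follows from the symmetric function theorem together with the fact that $\alpha$ is an algebraic integer, so its minimal polynomial is monic in $\Z[x]$. Once that is in place, the rest is a finiteness argument.
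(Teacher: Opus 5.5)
Your proof is correct and is essentially the paper's argument. The paper bounds the integer coefficients of the minimal polynomial of $\alpha^m$ (degree at most $d$, all conjugates in the closed unit disc) to get finitely many values $\alpha^m$; you work instead with $P_k(x)=\prod_i(x-\alpha_i^k)$, a power of that minimal polynomial, make its integrality explicit via symmetric functions, and finish with the same pigeonhole step $\alpha^{\ell-k}=1$.
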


\begin{proof}
Let $d=[\Q(\alpha):\Q]$.  For every $m\ge 1$, the algebraic integer
$\alpha^m$ has degree at most $d$, and all its conjugates have absolute value at
most $1$.  The coefficients of its minimal polynomial are therefore integers
bounded solely in terms of $d$.  Only finitely many such polynomials occur, and each has finitely many roots;
therefore only finitely many values $\alpha^m$ occur.  Hence
$\alpha^m=\alpha^n$ for some $m>n$.  Since $\alpha\ne 0$, it follows that
$\alpha^{m-n}=1$.
\end{proof}

\begin{proposition}[Subexponential recurrences]\label{prop:quasipolynomial}
Let $(u_n)_{n\ge 1}\subset\Z$ be a sequence satisfying a constant-coefficient
linear recurrence, and assume that
\[
  \log^+|u_n|=o(n).
\]
Then there are an integer $M\ge 1$ and polynomials
$Q_0,\dots,Q_{M-1}\in\Q[X]$ such that, for every residue class
$r\pmod M$,
\[
  u_n=Q_r(n)
\]
for all sufficiently large $n\equiv r\pmod M$.
\end{proposition}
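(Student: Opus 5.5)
We will work from the generating-function description $U(x)=A(x)/B(x)$ recorded in the introduction, with $A,B\in\Z[x]$ and $B(0)=1$. After cancelling common factors we may assume $\gcd(A,B)=1$ in $\Q[x]$; Gauss's lemma lets us keep $B$ primitive with $B(0)=1$. Factor
\[
  B(x)=\prod_{i}(1-\alpha_i x)^{m_i},
\]
where the $\alpha_i$ are the distinct nonzero characteristic roots. Since the reversed polynomial $x^{\deg B}B(1/x)$ is monic with integer coefficients, the $\alpha_i$ are algebraic integers, and they are permuted by Galois conjugation. A zero characteristic root only affects finitely many initial terms. By partial fractions,
\[
  u_n=\sum_i P_i(n)\alpha_i^n\qquad(n\ge n_1),
\]
with nonzero polynomials $P_i\in\overline{\Q}[X]$ that are pairwise distinct for distinct $\alpha_i$ (by coprimality, every $\alpha_i$ genuinely occurs).

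The central step is to show that $|\alpha_i|\le 1$ for every $i$. Suppose not. Let $R=\max_i|\alpha_i|>1$, and among the roots of modulus $R$ let $D$ be the largest degree of the $P_i$. Then
\[
  u_n/(n^D R^n)=\sum_{j}c_j\zeta_j^n+o(1),
\]
where the $\zeta_j$ are distinct points on the unit circle and the $c_j$ are not all zero. A nontrivial exponential polynomial $\sum c_j\zeta_j^n$ is not $o(1)$. One way to see this is via the mean-square identity
\[
  \frac1T\sum_{n\le T}\Bigl|\sum_j c_j\zeta_j^n\Bigr|^2\to\sum_j|c_j|^2>0,
\]
which holds because the cross terms are averages of $(\zeta_j\bar\zeta_k)^n$ with $\zeta_j\ne\zeta_k$ and therefore tend to $0$. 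Hence $|u_n|\ge \delta n^D R^n$ for infinitely many $n$, which contradicts $\log^+|u_n|=o(n)$.

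So all $|\alpha_i|\le1$. This set of roots is closed under conjugation, so every conjugate of each $\alpha_i$ also has modulus at most $1$, and Lemma~\ref{lem:kronecker} shows that each $\alpha_i$ is a root of unity. Let $M$ be a common order of the $\alpha_i$. For $n\equiv r\pmod M$ and $n\ge n_1$ we then get
\[
  u_n=\sum_iP_i(n)\alpha_i^r=:Q_r(n),
\]
with $Q_r\in\overline{\Q}[X]$.

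It remains to see that $Q_r$ has rational coefficients. The polynomial $Q_r$ takes integer values at infinitely many integers, namely $n\equiv r\pmod M$ with $n$ large. Lagrange interpolation through $\deg Q_r+1$ such points expresses $Q_r$ with rational coefficients, so $Q_r\in\Q[X]$.

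The main obstacle is the nonvanishing of the exponential sum on the dominant circle, which is what excludes $R>1$. The mean-square argument handles it cleanly. Alternatively, one could avoid it entirely by noting that if some $|\alpha_i|>1$, then the radius of convergence of $U$ is $1/R<1$, whereas $\log^+|u_n|=o(n)$ forces the radius of convergence to be at least $1$. This is a pole argument using the coprimality of $A$ and $B$, and it is simpler, so I would use it in the write-up.
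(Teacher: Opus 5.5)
Your proposal is correct. With the pole argument you say you would use in the write-up, it is essentially the paper's proof: the reduced rational generating function with $B(0)=1$, the radius of convergence forcing the zeros of $B$ out of the open unit disc, Kronecker applied to the Galois-closed set of reciprocal roots, partial fractions on residue classes modulo $M$, and interpolation for the rationality of the $Q_r$ (Lagrange where the paper uses the Newton form). Your mean-square dominant-root argument is also a valid alternative for excluding $R>1$; it works directly from the closed form rather than from the analyticity of $U$.
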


\begin{proof}
The generating series $U(x)=\sum_{n\ge 1}u_n x^n$ is rational.  Starting from
the recurrence denominator, whose constant term is $1$, and applying Gauss's lemma
after cancellation, we may write
\[
  U(x)=\frac{A(x)}{B(x)},
  \qquad A,B\in\Z[x],\quad \gcd(A,B)=1,\quad B(0)=1.
\]
If $B$ is constant, then $B=1$ and $U$ is a polynomial, so $u_n=0$ for all
sufficiently large $n$; the conclusion follows with $M=1$ and $Q_0=0$.  We may
therefore assume that $B$ is nonconstant.  The growth hypothesis implies that the
radius of convergence is at least $1$; hence every zero $\rho$ of $B$ satisfies
$|\rho|\ge 1$.  The reciprocal $\alpha=\rho^{-1}$ is an algebraic integer,
because the reciprocal polynomial of $B$ is monic.  Every conjugate of $\alpha$
is the reciprocal of a zero of $B$, so Lemma~\ref{lem:kronecker} shows that
$\alpha$ is a root of unity.

Partial fractions therefore give, for all sufficiently large $n$,
\[
  u_n=\sum_{j=1}^s\sum_{k=1}^{e_j}
       c_{j,k}\binom{n+k-1}{k-1}\zeta_j^n,
\]
where the $\zeta_j$ are roots of unity.  Let $M$ be a common multiple of their
orders.  On each residue class $r\pmod M$, the factors $\zeta_j^n$ are
constant, so $u_n=Q_r(n)$ eventually for some $Q_r\in\mathbf C[X]$.

It remains to see that $Q_r$ has rational coefficients.  This is immediate if
$Q_r=0$.  Otherwise, choose $N_r$ so that
$Q_r(r+M(t+N_r))\in\Z$ for all $t\ge 0$, and put
\[
  P_r(t):=Q_r(r+M(t+N_r)).
\]
Then $P_r(t)\in\Z$ for every $t\ge 0$.  If $d=\deg P_r$, Newton interpolation
gives
\[
  P_r(t)=\sum_{j=0}^d \Delta^j P_r(0)\binom{t}{j},
\]
where every $\Delta^j P_r(0)$ is an integer.  Thus $P_r\in\Q[t]$, and hence
$Q_r\in\Q[X]$.
\end{proof}

The next lemma is the local step that replaces the need for a general
classification theorem for linear divisibility sequences.  Its hypothesis says that
fixed-prime valuations can grow only through the factor $n+N$.

\begin{lemma}[Valuation-constrained quasipolynomials]\label{lem:local-polynomial}
Fix $N\ge 0$.  Let $(u_n)_{n\ge 1}$ be a sequence of nonzero integers.  Suppose
that there are $M\ge 1$ and $Q_0,\dots,Q_{M-1}\in\Q[X]$ such that
\[
  u_n=Q_r(n)
\]
for all sufficiently large $n\equiv r\pmod M$.  Suppose also that for every
prime $p$ there is a constant $C_p$ satisfying
\begin{equation}\label{eq:valuation-hypothesis}
  v_p(u_n)\le C_p+v_p(n+N)
  \qquad(n\ge 1).
\end{equation}
Then, for each $r\pmod M$, there are $\lambda_r\in\Q^\times$ and
$e_r\in\{0,1\}$ such that
\[
  Q_r(X)=\lambda_r(X+N)^{e_r}.
\]
\end{lemma}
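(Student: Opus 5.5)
Fix a residue class $r\pmod M$ and write $Q=Q_r$. Since the $u_n$ are nonzero and $u_n=Q(n)$ for all large $n\equiv r\pmod M$, the polynomial $Q$ is not identically zero. Clearing denominators, choose a positive integer $D$ with $DQ\in\Z[X]$. For any prime $p$ this gives $v_p(DQ(n))=v_p(D)+v_p(u_n)$ along the class. The plan is to show that $DQ(X)$ has no irreducible factor other than a constant and possibly a single factor $X+N$. If that holds, then $Q=\lambda_r(X+N)^{e_r}$ with $\lambda_r\in\Q^\times$, and it remains only to show $e_r\le1$.

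\emph{Step 1: every irreducible factor is $X+N$.} Suppose $DQ$ has an irreducible factor $F\in\Z[X]$ that is not an associate of $X+N$. Then $F$ and $X+N$ are coprime in $\Q[X]$, so there is a resultant identity $A(X)F(X)+B(X)(X+N)=R$ with $A,B\in\Z[X]$ and $R$ a nonzero integer. By Schur's theorem, infinitely many primes $p$ divide some value of $F$. Choose one with $p\nmid R\cdot M\cdot D\cdot\operatorname{lc}(F)$ and $p\nmid\operatorname{disc}(F)$, and pick $x_0$ with $F(x_0)\equiv0\pmod p$. Because $p\nmid\operatorname{disc}(F)$, the root $x_0$ is simple mod $p$, so Hensel's lemma lifts it to roots $x_k$ modulo $p^k$ for every $k$. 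Since $p\nmid M$, the Chinese remainder theorem produces $n\equiv r\pmod M$ and $n\equiv x_k\pmod{p^k}$, and $n$ can be taken arbitrarily large. For such $n$ we get $v_p(u_n)\ge v_p(F(n))\ge k$. On the other hand, $p\nmid R$ forces $p\nmid(n+N)$ whenever $p\mid F(n)$, so $v_p(n+N)=0$. Letting $k\to\infty$ contradicts \eqref{eq:valuation-hypothesis}.

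\emph{Step 2: the exponent is at most $1$.} By Step 1, $Q=\lambda(X+N)^{e}$. Suppose $e\ge2$. Take a prime $p\nmid MD$ and $n\equiv r\pmod M$ with $n\equiv -N\pmod{p^k}$. Then $v_p(u_n)=e\,v_p(n+N)\ge v_p(n+N)+k$, since $v_p(\lambda)=0$ for such $p$. Letting $k\to\infty$ again contradicts \eqref{eq:valuation-hypothesis}, so $e_r\in\{0,1\}$.

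The main obstacle is Step 1. It needs infinitely many prime divisors of the values of $F$ (Schur), together with Hensel lifting to force arbitrarily high valuation along the residue class while keeping $p$ coprime to $n+N$. The care lies in choosing $p$ to avoid the finitely many bad primes: those dividing $R$, $M$, $D$, the leading coefficient, and the discriminant. Constant factors cause no trouble, since $\lambda_r$ is allowed to be an arbitrary element of $\Q^\times$.
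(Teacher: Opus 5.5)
Your proof is correct and takes essentially the same route as the paper: Schur's theorem, Hensel lifting of a simple root and the Chinese remainder theorem force unbounded $v_p(u_n)$ at a prime $p$ coprime to $n+N$, and a direct valuation count then gives $e_r\le 1$. Your resultant identity with $X+N$ just repackages the paper's shift $F(Y)=DQ(Y-N)=Y^eH(Y)$ with $p\nmid h(0)$. One small slip: $p\nmid D$ only gives $v_p(\lambda)\ge 0$, not $v_p(\lambda)=0$, so write $v_p(u_n)\ge e\,v_p(n+N)$ (which is all you use), or also exclude primes dividing the numerator of $\lambda$, as the paper does.
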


\begin{proof}
Fix $r$ and set $Q=Q_r$.  The polynomial $Q$ is nonzero, because $u_n$ is
nonzero on infinitely many integers in the residue class.  Choose $D\ge 1$ such
that
\[
  F(Y):=D\,Q(Y-N)\in\Z[Y].
\]
Write
\[
  F(Y)=Y^e H(Y),
  \qquad H(0)\ne 0.
\]
We first show that $H$ is constant.

Suppose otherwise, and choose a primitive nonconstant irreducible factor
$h\in\Z[Y]$ of $H$.  By Schur's theorem, infinitely many primes divide a
nonzero value of $h$.  We may therefore choose such a prime $p$ outside the
finite set of primes dividing
\[
  D M h(0)\operatorname{disc}(h)
\]
and the leading coefficient of $h$.  Thus $h$ has a nonzero simple root modulo
$p$.  By the simple-root form of Hensel's lemma, for every $k\ge 1$ there is a
nonzero residue class $y_k\pmod{p^k}$ such that
\[
  h(y_k)\equiv 0\pmod{p^k}.
\]
Since $p\nmid M$, the Chinese remainder theorem gives arbitrarily large integers
$y$ satisfying
\[
  y\equiv r+N\pmod M,
  \qquad
  y\equiv y_k\pmod{p^k}.
\]
Put $n=y-N$.  Taking $y$ large ensures $n\ge 1$ and that the eventual polynomial
identity applies.  Moreover $p\nmid y=n+N$, whereas
$p^k\mid F(y)=D u_n$.  Since $p\nmid D$, this yields
$v_p(u_n)\ge k$, contradicting \eqref{eq:valuation-hypothesis} for large $k$.
Therefore $H$ is constant, and
\[
  Q(X)=\lambda(X+N)^e
\]
for some $\lambda\in\Q^\times$ and $e\ge 0$.

It remains to prove $e\le 1$.  Choose a prime $p\nmid DM$ and outside the
numerator and denominator of $\lambda$.  For every $k\ge 1$, the Chinese
remainder theorem gives arbitrarily large $y$ such that
\[
  y\equiv r+N\pmod M,
  \qquad
  y\equiv p^k\pmod{p^{k+1}}.
\]
For $n=y-N$ in the eventual range, one has $v_p(n+N)=k$ and
$v_p(u_n)=ek$.  Hence \eqref{eq:valuation-hypothesis} gives
$ek\le C_p+k$ for every $k$, so $e\le 1$.
\end{proof}

For completeness, we record the elementary passage from eventual periodicity to
periodicity for recurrences with nonzero trailing coefficient.

\begin{lemma}\label{lem:eventual-periodic}
Let $(u_n)_{n\ge 1}$ satisfy a constant-coefficient recurrence with nonzero trailing
coefficient.  If $(u_n)$ is eventually periodic, then it is periodic.
\end{lemma}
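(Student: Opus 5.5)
We will propagate periodicity backwards, using the fact that a nonzero trailing coefficient lets the recurrence be solved for its earliest term. Suppose $(u_n)$ satisfies
\[
  u_{n+d}=c_1u_{n+d-1}+\cdots+c_du_n\qquad(n\ge 1),\qquad c_d\ne 0.
\]
Since $c_d\ne 0$, this gives
\[
  u_n=c_d^{-1}\bigl(u_{n+d}-c_1u_{n+d-1}-\cdots-c_{d-1}u_{n+1}\bigr)\qquad(n\ge 1).
\]
Thus $u_n$ is determined by the $d$ terms following it, through a map that does not depend on $n$.

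Assume $u_{n+T}=u_n$ for all $n\ge n_0$, with some period $T\ge 1$. Put $v_n:=u_{n+T}$. Then $(v_n)$ satisfies the same recurrence, and $v_n=u_n$ for all $n\ge n_0$. We argue by downward induction on $n$. If $n_0>1$ and $v_m=u_m$ for all $m\ge n_0$, apply the backward formula at index $n=n_0-1\ge 1$ to both sequences. The right-hand sides involve only indices $n_0,\dots,n_0+d-1$, where the two sequences agree, so $v_{n_0-1}=u_{n_0-1}$. Repeating this down to $n=1$ gives $u_{n+T}=u_n$ for all $n\ge 1$, so $(u_n)$ is periodic from the first term.

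There is no real obstacle here. The only point to check is that the backward formula is valid at every index $n\ge 1$: the recurrence holds from $n=1$ by hypothesis, and it involves only indices $\ge 1$. The division by $c_d$ takes place in $\Q$ and causes no problem, since we only use that $c_d\ne 0$ makes the determination unique. Equivalently, the difference $w_n=v_n-u_n$ satisfies the recurrence and vanishes eventually, and the backward formula forces $w\equiv 0$.
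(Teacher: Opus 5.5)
Your proof is correct and is essentially the paper's argument. The paper takes the difference $v_n=u_{n+M}-u_n$, notes that it satisfies the same recurrence and vanishes eventually, and uses $c_d\ne 0$ to propagate the zeros backwards, which is exactly the equivalent formulation you give at the end.
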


\begin{proof}
Let $M$ be an eventual period and set $v_n=u_{n+M}-u_n$.  Choose a recurrence
\eqref{eq:recurrence-definition} for $(u_n)$ with $c_d\ne 0$.  The sequence $(v_n)$
satisfies the same recurrence and vanishes for all sufficiently large $n$.  Moreover,
\[
  c_d v_n=v_{n+d}-c_1v_{n+d-1}-\dots-c_{d-1}v_{n+1}.
\]
Since $c_d\ne 0$, consecutive zeros propagate backwards.  Thus $v_n=0$ for every
$n\ge 1$, and $M$ is a period of the whole sequence.
\end{proof}

\section{Proofs of the main theorems}\label{sec:main-proofs}

\begin{proof}[Proof of Theorem~\ref{thm:common-factor}]
Because $W_n$ divides the two nonzero integers in
\eqref{eq:common-factor-hypothesis}, one has $W_n\ne 0$ and
\[
  |W_n|\le \gcd(a^{n+N}-1,b^{n+N}-1).
\]
Applying Theorem~\ref{thm:BCZ} at the exponent $n+N$ gives
\[
  \log^+|W_n|=o(n+N)=o(n).
\]
By Proposition~\ref{prop:quasipolynomial}, there are $M\ge 1$ and polynomials
$Q_0,\dots,Q_{M-1}\in\Q[X]$ such that $W_n=Q_r(n)$ for all sufficiently large
$n\equiv r\pmod M$.

For every prime $p$, Lemma~\ref{lem:valuation-bound} gives
\[
  v_p(W_n)\le v_p(a^{n+N}-1)
  \le C_{a,p}+v_p(n+N).
\]
Lemma~\ref{lem:local-polynomial} now shows that
\begin{equation}\label{eq:linear-or-constant}
  Q_r(X)=\lambda_r(X+N)^{e_r},
  \qquad e_r\in\{0,1\},
\end{equation}
for every $r\pmod M$.

We claim that in fact $e_r=0$ for every $r$.  Suppose that $e_r=1$ for some
$r\in\{0,\dots,M-1\}$.  Put
\[
  d:=\gcd(r+N,M),\qquad r+N=d r_0,\qquad M=dM_0.
\]
Then $\gcd(r_0,M_0)=1$, except in the harmless case $r+N=0$, when $M_0=1$.
By Dirichlet's theorem, there are infinitely many primes
\[
  \ell\equiv r_0\pmod{M_0}.
\]
Write $\lambda_r=A_r/B_r$ in lowest terms, with
$A_r\in\Z\setminus\{0\}$ and $B_r\in\Z_{>0}$.  Choose such primes outside the
finite set dividing $a d A_rB_r$, and large enough that
\[
  n_\ell:=d\ell-N\ge 1
\]
lies in the eventual range.  Since $n_\ell\equiv r\pmod M$,
\eqref{eq:linear-or-constant} gives
\[
  W_{n_\ell}=\frac{A_r d\ell}{B_r}\in\Z.
\]
Because $\gcd(A_r,B_r)=1$ and $\ell\nmid B_r$, integrality forces
$B_r\mid d$.  Consequently
\[
  W_{n_\ell}=\left(\frac{A_r d}{B_r}\right)\ell,
\]
so $\ell\mid W_{n_\ell}$.  The divisibility hypothesis implies
\[
  \ell\mid a^{d\ell}-1.
\]
As $\ell\nmid a$, Fermat's theorem yields
$a^{d\ell}\equiv a^d\pmod\ell$, and hence
\[
  \ell\mid a^d-1.
\]
This is impossible for infinitely many distinct primes $\ell$.  Thus every
$e_r=0$.

Consequently, $(W_n)$ is eventually constant on each residue class modulo $M$,
so it is eventually periodic.  If $(W_n)$ satisfies a recurrence with nonzero
trailing coefficient, Lemma~\ref{lem:eventual-periodic} shows that it is periodic
from the beginning.
\end{proof}

\begin{proof}[Proof of Theorem~\ref{thm:classification}]
Assume first that $a$ and $b$ are multiplicatively dependent.  By
Lemma~\ref{lem:common-base}, there are $c\ge 2$ and $r,s\ge 1$ such that
$a=c^r$ and $b=c^s$.  If $d=\gcd(r,s)$, then
\[
  g_n=\gcd(c^{rn}-1,c^{sn}-1)=c^{dn}-1.
\]
In particular,
\[
  g_{n+2}=(c^d+1)g_{n+1}-c^d g_n,
\]
so $(g_n)$ satisfies a constant-coefficient linear recurrence.  This proves
\textup{(1)$\Rightarrow$(2)}, and \textup{(2)$\Rightarrow$(3)} is immediate.

It remains to prove \textup{(3)$\Rightarrow$(1)}.  Suppose, to the contrary, that
$a$ and $b$ are multiplicatively independent and that
\[
  W_n:=g_{n+N}
\]
satisfies a constant-coefficient linear recurrence for some $N\ge 0$.  Since
\[
  W_n\mid a^{n+N}-1
  \qquad\text{and}\qquad
  W_n\mid b^{n+N}-1,
\]
Theorem~\ref{thm:common-factor} shows that $(W_n)$ is eventually periodic, and
hence bounded.
On the other hand, for every sufficiently large prime $\ell\nmid ab$, the index
$n=\ell-1-N$ is positive and Fermat's theorem gives
\[
  \ell\mid a^{\ell-1}-1,
  \qquad
  \ell\mid b^{\ell-1}-1.
\]
Thus
\[
  \ell\mid W_{\ell-1-N}=g_{\ell-1},
\]
and hence $|W_{\ell-1-N}|\ge \ell$.  Therefore $(W_n)$ is unbounded.  This
contradiction proves that $a$ and $b$ are
multiplicatively dependent.
\end{proof}

\begin{corollary}\label{cor:lds-classification}
The sequence $(g_n)_{n\ge 1}$ is a linear divisibility sequence if and only if
$a$ and $b$ are multiplicatively dependent.
\end{corollary}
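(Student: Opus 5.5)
The corollary follows from Theorem~\ref{thm:classification} together with the definition of a linear divisibility sequence: a sequence of nonzero integers that is a divisibility sequence and admits a recurrence with nonzero trailing coefficient. The introduction already notes that $(g_n)$ is always a divisibility sequence. Its terms are nonzero, since $a,b\ge 2$ gives $g_n\ge 1$ (indeed $g_n\ge \gcd(a-1,b-1)\ge 1$). So the only issue is whether $(g_n)$ satisfies a recurrence with nonzero trailing coefficient.

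Suppose first that $a$ and $b$ are multiplicatively dependent. Theorem~\ref{thm:classification} gives $g_n=c^{dn}-1$ with $c\ge 2$ and $d\ge 1$, and the proof of that theorem exhibits the recurrence
\[
  g_{n+2}=(c^d+1)g_{n+1}-c^d g_n .
\]
Its trailing coefficient $-c^d$ is nonzero. Hence $(g_n)$ is an LDS.

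Conversely, suppose $(g_n)$ is an LDS. Then in particular it satisfies a constant-coefficient linear recurrence, so condition (2) of Theorem~\ref{thm:classification} holds. Therefore $a$ and $b$ are multiplicatively dependent.

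There is no real obstacle here. The only point needing care is that the recurrence exhibited in the dependent case has nonzero trailing coefficient, which is what distinguishes an LDS from a general $C$-finite sequence under the paper's convention.
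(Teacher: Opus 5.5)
Your proposal is correct and follows the paper's argument: the corollary is read off from Theorem~\ref{thm:classification}, using that $(g_n)$ is always a divisibility sequence. You also make explicit two points the paper leaves implicit, namely that the terms are nonzero and that the recurrence $g_{n+2}=(c^d+1)g_{n+1}-c^d g_n$ has nonzero trailing coefficient $-c^d$, which is exactly what the LDS convention requires.
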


\begin{proof}
The sequence $(g_n)$ is always a divisibility sequence, so the assertion follows
immediately from Theorem~\ref{thm:classification}.
\end{proof}


\begin{thebibliography}{99}

\bibitem{AR04}
N.~Ailon and Z.~Rudnick,
\emph{Torsion points on curves and common divisors of $a^k-1$ and $b^k-1$},
Acta Arith. \textbf{113} (2004), no.~1, 31--38.
\href{https://doi.org/10.4064/aa113-1-3}{doi:10.4064/aa113-1-3}.

\bibitem{BCT24}
F.~Barroero, L.~Capuano, and A.~Turchet,
\emph{Greatest common divisor results on semiabelian varieties and a conjecture of Silverman},
Res. Number Theory \textbf{10} (2024), Paper No.~17.
\href{https://doi.org/10.1007/s40993-023-00494-2}{doi:10.1007/s40993-023-00494-2}.

\bibitem{BCZ03}
Y.~Bugeaud, P.~Corvaja, and U.~Zannier,
\emph{An upper bound for the G.C.D. of $a^n-1$ and $b^n-1$},
Math. Z. \textbf{243} (2003), no.~1, 79--84.
\href{https://doi.org/10.1007/s00209-002-0449-z}{doi:10.1007/s00209-002-0449-z}.

\bibitem{BPV90}
J.-P.~B\'ezivin, A.~Peth\H{o}, and A.~J.~van der Poorten,
\emph{A full characterisation of divisibility sequences},
Amer. J. Math. \textbf{112} (1990), no.~6, 985--1001.
\href{https://doi.org/10.2307/2374733}{doi:10.2307/2374733}.

\bibitem{GHT18}
D.~Ghioca, L.-C.~Hsia, and T.~J.~Tucker,
\emph{A variant of a theorem by Ailon--Rudnick for elliptic curves},
Pacific J. Math. \textbf{295} (2018), no.~1, 1--15.
\href{https://doi.org/10.2140/pjm.2018.295.1}{doi:10.2140/pjm.2018.295.1}.

\bibitem{Gra22}
A.~Granville,
\emph{Classifying linear division sequences},
Amer. J. Math., to appear;
\href{https://arxiv.org/abs/2206.11823}{arXiv:2206.11823}.

\bibitem{GW20}
N.~Grieve and J.~Tzu-Yueh Wang,
\emph{Greatest common divisors with moving targets and consequences for linear recurrence sequences},
Trans. Amer. Math. Soc. \textbf{373} (2020), no.~11, 8095--8126.
\href{https://doi.org/10.1090/tran/8220}{doi:10.1090/tran/8220}.

\bibitem{Hall36}
M.~Hall,
\emph{Divisibility sequences of third order},
Amer. J. Math. \textbf{58} (1936), no.~3, 577--584.
\href{https://doi.org/10.2307/2370976}{doi:10.2307/2370976}.

\bibitem{Lev19}
A.~Levin,
\emph{Greatest common divisors and Vojta's conjecture for blowups of algebraic tori},
Invent. Math. \textbf{215} (2019), no.~2, 493--533.
\href{https://doi.org/10.1007/s00222-018-0831-z}{doi:10.1007/s00222-018-0831-z}.

\bibitem{Ost16}
A.~Ostafe,
\emph{On some extensions of the Ailon--Rudnick theorem},
Monatsh. Math. \textbf{181} (2016), no.~2, 451--471.
\href{https://doi.org/10.1007/s00605-016-0911-3}{doi:10.1007/s00605-016-0911-3}.

\bibitem{Sil04}
J.~H.~Silverman,
\emph{Common divisors of elliptic divisibility sequences over function fields},
Manuscripta Math. \textbf{114} (2004), no.~4, 431--446.
\href{https://doi.org/10.1007/s00229-004-0468-7}{doi:10.1007/s00229-004-0468-7}.

\bibitem{Sil05}
J.~H.~Silverman,
\emph{Generalized greatest common divisors, divisibility sequences, and Vojta's conjecture for blowups},
Monatsh. Math. \textbf{145} (2005), no.~4, 333--350.
\href{https://doi.org/10.1007/s00605-005-0299-y}{doi:10.1007/s00605-005-0299-y}.

\bibitem{Sil20}
J.~H.~Silverman,
\emph{A problem in (ostensibly elementary) number theory},
Problem~225 in M.~T.~Rassias, \emph{Solved and unsolved problems},
EMS Newsl., no.~115 (2020), 45--52, at 47--48.
\href{https://doi.org/10.4171/NEWS/115/11}{doi:10.4171/NEWS/115/11}.

\bibitem{Tron20}
E.~Tron,
\emph{The greatest common divisor of linear recurrences},
Rend. Semin. Mat. Univ. Politec. Torino \textbf{78} (2020), no.~1, 103--124.
\href{https://seminariomatematico.polito.it/rendiconti/78-1.html}{journal page}.

\bibitem{Ward37}
M.~Ward,
\emph{Linear divisibility sequences},
Trans. Amer. Math. Soc. \textbf{41} (1937), no.~2, 276--286.
\href{https://doi.org/10.1090/S0002-9947-1937-1501902-1}{doi:10.1090/S0002-9947-1937-1501902-1}.

\end{thebibliography}
\end{document}